\documentclass[12pt]{amsart}
\usepackage[dvipdfmx]{graphicx}
\title[Weinstein trisections of trivial surface bundles.]
{Weinstein trisections of trivial surface bundles.}
\author{Masaki Ogawa}
\address{Mathematical science center for co-creative society, Tohoku University, Aoba-6-3 Aramaki, Aoba Ward, Sendai, Miyagi 980-0845}
\email{masaki.ogawa.b7@tohoku.ac.jp}

\usepackage[utf8]{inputenc}
\usepackage{amsmath,amssymb,amsthm}
\usepackage{epsfig}
\usepackage{txfonts}
\usepackage{graphicx}
\usepackage{ulem}
\usepackage{color}
\usepackage[all]{xy}
\usepackage{fancybox}
\usepackage{lscape}
\usepackage{comment}
\usepackage{caption}
\usepackage{here}
\usepackage{setspace}
\usepackage{bm}

 
\theoremstyle{plain}
\newtheorem{theorem}{Theorem}[section]
\newtheorem{lemma}[theorem]{Lemma}
\newtheorem{corollary}[theorem]{Corollary}

\newtheorem{question}[theorem]{Question}

\theoremstyle{definition}
\newtheorem{definition}[theorem]{Definition}

\theoremstyle{definition}


\definecolor{myred}{rgb}{.8,.0,.0}
\definecolor{mygreen}{rgb}{.0,.6,.0}
\definecolor{mygray}{gray}{0.7}

\makeatletter 
 \@mparswitchfalse 
\makeatother


\newcounter{mystepcount}

\begin{document}
\maketitle
\begin{abstract}
	Weinstein trisection is a trisection of a symplectic 4-manifold whose 1-handlebodies are the Weinstein domain for the symplectic structure induced from an ambient manifold.
	Lambert-Cole, Meier, and Starkston showed that every closed symplectic 4-manifold admits a Weinstein trisection. In this paper, we construct a Weinstein trisection of $\Sigma_g\times \Sigma_h$. 
	As a consequence of this construction, we construct a little explicit Weinstein trisection of $S^2\times S^2$.
\end{abstract}
\section{introduction}
A trisection, introduced in \cite{GK} has been studied by many authors. 
One of the kind of the field in trisection theory is a Weinstein trisection.
Lambert-Cole reproved the Thom conjecture by using trisection of $\mathbb{C}P^2$ and inequality about an invariant induced by Khovanov homology.
In the proof, he used a contact structure induced in the spine of trisection.

Recently, Lambert-Cole, Meier, and Starkston introduce a trisection adapted to a symplectic 4-manifold called a Weinstein trisection\cite{LMS}.
They also showed that every closed symplectic 4-manifold admits a Weinstein trisection.
Weinstein trisection sometimes gives us a tool to study symplectic closed 4-manifolds and symplectic surfaces in them \cite{L1, L2}.

There are some examples of Weinstein trisection. 
For example, Weinstein trisection of $\mathbb{C}P^2$ and $\mathbb{C}P^2\#  \overline{\mathbb{C}P^2}$ is described in \cite{LMS}.
On the other hand, given a finite group $G$, there is a symplectic closed 4-manifold with fundamental group isomorphic $G$.
This means that there are so many symplectic closed 4-manifolds. 
Therefore, it is very important to make an example of Weinstein trisection as the first step to studying symplectic closed 4-manifolds with Weinstein trisection.
The main result of this paper is the following:
\begin{theorem}\label{thm1}
	$\Sigma_g\times \Sigma_h$ admits a genus $(2g+1)(2h+1)+1$ Weinstein trisection for some symplectic structure.
\end{theorem}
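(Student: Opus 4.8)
The plan is to build the trisection from the product structure of $\Sigma_g\times\Sigma_h$ and then to upgrade it to a Weinstein trisection by assembling compatible Liouville structures on the three sectors out of product models. Throughout, equip $\Sigma_g\times\Sigma_h$ with the split symplectic form $\omega=\pi_g^{*}\omega_g+\pi_h^{*}\omega_h$, where $\pi_g,\pi_h$ are the two projections and $\omega_g,\omega_h$ are area forms; this is the ``for some symplectic structure'' in the statement.

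\emph{The smooth trisection.} I would fix self-indexing Morse functions $f\colon\Sigma_g\to[0,2]$ and $k\colon\Sigma_h\to[0,2]$, each with a single index-$0$ and a single index-$2$ critical point and with $2g$ (resp.\ $2h$) index-$1$ critical points. The product handle decompositions then give $\Sigma_g\times\Sigma_h$ a handle decomposition with one $0$-handle, $2g+2h$ $1$-handles, $4gh+2$ $2$-handles, $2g+2h$ $3$-handles, and one $4$-handle, and I would feed this into the procedure of \cite{GK} that turns a handle decomposition into a trisection: take the union of the $0$- and $1$-handles as the first sector (a genus-$(2g+2h)$ handlebody $\natural^{2g+2h}(S^1\times B^3)$), split the $2$-handles between the other two sectors, and absorb the $3$- and $4$-handles using their duals, stabilizing only as forced by the attaching data. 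A careful count of the $0$- and $1$-cells produced this way yields a balanced trisection whose sectors have genus $2g+2h$ and whose central surface $\Sigma$ has genus
\[
(2g+2h)+(4gh+2)=(2g+1)(2h+1)+1,
\]
which is consistent with the Euler-characteristic identity $(2-2g)(2-2h)=2+g(\Sigma)-3(2g+2h)$. The whole datum is recorded by a trisection diagram $(\Sigma;\alpha,\beta,\gamma)$ built from the product grid of $f$ and $k$; when $g=h=0$ the grid reduces to a single cell and one recovers the classical genus-$2$ diagram of $S^2\times S^2$.

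\emph{The Weinstein structure.} Since each sector $X_i$ is a $4$-dimensional $1$-handlebody, $H^2(X_i;\R)=0$ and $\omega|_{X_i}$ is exact; the task is to choose primitives $\lambda_i$ whose Liouville vector fields $Z_i$, defined by $\iota_{Z_i}\omega=\lambda_i$, are gradient-like for Morse functions realizing $X_i$ as a Weinstein domain, with the convex-boundary data matching along the $3$-dimensional handlebodies $H_{ij}=X_i\cap X_j$ and with $\Sigma$ symplectic. The point is that over each cell of the $f$--$k$ grid the form $\omega$ is split on a product of (sub)surfaces, where one has the standard product Weinstein model $(\text{surface-with-Liouville-field})\times(\text{surface-with-Liouville-field})$, with the ``critical'' directions required to be disks so that the local model is a $1$-handlebody; I would assemble the $\lambda_i$ by interpolating these local models across the grid, treating the saddle regions of $f$ and $k$ by standard Weinstein $1$-handle attachments, and then verify the hypotheses of the Lambert--Cole--Meier--Starkston criterion \cite{LMS}, concluding that the datum is isotopic to a Weinstein trisection.

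\emph{Main obstacle.} The real work is in this last step: globally reconciling the split Liouville models so that all three sectors are simultaneously Weinstein with compatible boundaries. Two issues dominate. First, the grouping of grid cells into three sectors must be carried out along hypersurfaces admitting a transverse global Liouville field, and arranged so that no sector contains a closed symplectic surface, which would obstruct the $1$-handlebody property; I expect to handle this by choosing the grouping so that each sector deformation-retracts onto an explicit isotropic complex built from Legendrian push-offs of the skeleta of $f$ and $k$. Second, one must identify the Weinstein structure induced on each subcritical handlebody $X_i$ by a choice of primitive with the expected model; here I would invoke that subcritical Weinstein domains are determined up to Weinstein homotopy by their smooth handle data, so that the matching along the $H_{ij}$ can be achieved by a Weinstein isotopy rather than built by hand. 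Carrying out these two points proves the theorem, and specializing to $g=h=0$ — where the grid collapses and the bookkeeping is transparent — yields the promised explicit genus-$2$ Weinstein trisection of $S^2\times S^2$.
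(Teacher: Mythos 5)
Your proposal has a genuine gap at the step that actually constitutes the theorem: producing the three Weinstein structures from a single ambient symplectic form. You correctly identify this as the ``main obstacle,'' but the tools you propose to resolve it do not apply. The uniqueness of subcritical Weinstein domains up to Weinstein homotopy is a statement about abstract Weinstein structures; here each $X_i$ must carry a Liouville field for the \emph{restriction of the fixed global} $\omega$, and all three must coexist simultaneously, so you cannot deform one sector's structure by an abstract Weinstein homotopy without losing control of the other two and of the gluing along the $H_{ij}$. Likewise there is no ``criterion'' in \cite{LMS} to verify: their existence proof goes through branched coverings of $\mathbb{CP}^2$, not through checking hypotheses on a given trisection. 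Interpolating local product Liouville models across the grid is exactly the hard analytic/geometric work, and your proposal defers it rather than carrying it out. A secondary, smaller issue: your smooth construction runs the Gay--Kirby handle-decomposition algorithm and asserts that the genus comes out to $(2g+1)(2h+1)+1$ with no extra stabilization; consistency with the Euler-characteristic identity is necessary but not sufficient, and positioning the $4gh+2$ two-handle attaching circles on a genus-$(2g+2h)$ Heegaard surface with the correct framings is precisely the content of Williams' careful argument, which you would need to reproduce.

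The paper sidesteps both difficulties by working with Williams' explicit decomposition: $\Sigma_g=B_1\cup B_2\cup B_3$ into three disks and three disjoint disks $N_i\subset\Sigma_h$, with
\[
X_i=\bigl((\Sigma_h-\mathrm{int}(N_i))\times B_i\bigr)\cup\bigl(N_{i+1}\times B_{i+1}\bigr),
\]
so that each sector sits inside the product of two \emph{Stein} subsurfaces $(\Sigma_h-N_i)\times(\Sigma_g-B_{i+2})$. Choosing $J$-convex functions $f_i$, $g_i$ on the factors whose $-dd^{\mathbb{C}}$ recovers the area forms (and using Hamiltonian isotopies and Boothby transitivity to permute the $B_i$ and $N_i$ symplectically), the sum $f_i+g_i$ is $J$-convex for the product form, its gradient is automatically a Liouville field for $\omega$ by Eliashberg--Gromov, and the only thing left to check is that this field points outward along $\partial X_i$, which is done region by region using that $B_i$ and $N_{i+1}$ are neighborhoods of index-$0$ critical points. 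If you want to salvage your approach, the product trick is the missing idea: arrange each sector to be (up to a corner piece) a product of pieces each already carrying a compatible Stein structure, so that no interpolation or abstract homotopy is needed.
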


This implies that the trisection genus of $\Sigma_g\times \Sigma_h$ equals the Weinstein trisection genus of its.

\begin{corollary}
	The trisection genus of $\Sigma_g\times \Sigma_h$ equals the Weinstein trisection genus of its.
\end{corollary}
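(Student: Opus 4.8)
The plan is to read the corollary off from Theorem~\ref{thm1} together with the standard lower bound on the trisection genus in terms of $\chi$ and $b_1$. Every Weinstein trisection is in particular a trisection, so the Weinstein trisection genus of a closed symplectic $4$-manifold is always at least its trisection genus; hence Theorem~\ref{thm1} already gives
\[
g_{\mathrm{tri}}(\Sigma_g\times\Sigma_h)\ \le\ g_{\mathrm{W}}(\Sigma_g\times\Sigma_h)\ \le\ (2g+1)(2h+1)+1 .
\]
It therefore suffices to prove the matching lower bound $g_{\mathrm{tri}}(\Sigma_g\times\Sigma_h)\ge (2g+1)(2h+1)+1$.

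For the lower bound I would use two facts about an arbitrary $(g;k_1,k_2,k_3)$-trisection of a closed $4$-manifold $X$ with sectors $X_1,X_2,X_3$. First, inclusion--exclusion applied to $X=X_1\cup X_2\cup X_3$, using $\chi(X_i)=1-k_i$, $\chi(X_i\cap X_j)=1-g$ and $\chi(X_1\cap X_2\cap X_3)=\chi(\Sigma_g)=2-2g$, gives $\chi(X)=2+g-(k_1+k_2+k_3)$. Second, $b_1(X)\le k_i$ for each $i$: writing $X=X_i\cup(X_j\cup X_k)$ with $X_i\cap(X_j\cup X_k)=\partial X_i\cong \#^{k_i}(S^1\times S^2)$ and noting that $\pi_1(\partial X_i)\to\pi_1(X_i)$ is an isomorphism (both groups are free of rank $k_i$ and the map is onto), van Kampen yields $\pi_1(X)\cong\pi_1(X_j\cup X_k)$; and since $X_j\cap X_k$ is one handlebody of the genus-$g$ Heegaard splitting $\partial X_k=(X_i\cap X_k)\cup_{\Sigma_g}(X_j\cap X_k)$, the map $\pi_1(X_j\cap X_k)\to\pi_1(X_k)$ is surjective, so $\pi_1(X_j\cup X_k)$ is a quotient of $\pi_1(X_j)=F_{k_j}$. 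Permuting the three sectors, $\pi_1(X)$ is a quotient of $F_{k_i}$ for every $i$, so $b_1(X)\le\operatorname{rank}\pi_1(X)\le k_i$. Combining the two facts, every trisection satisfies $g=\chi(X)-2+(k_1+k_2+k_3)\ge \chi(X)-2+3b_1(X)$, hence $g_{\mathrm{tri}}(X)\ge \chi(X)+3b_1(X)-2$.

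Finally I would specialize to $X=\Sigma_g\times\Sigma_h$, where $\chi(X)=(2-2g)(2-2h)$ and, by K\"unneth, $b_1(X)=2g+2h$. A one-line computation gives $\chi(X)+3b_1(X)-2=(2g+1)(2h+1)+1$, which is exactly the genus realized in Theorem~\ref{thm1}. Hence the displayed chain of inequalities collapses and $g_{\mathrm{tri}}(\Sigma_g\times\Sigma_h)=g_{\mathrm{W}}(\Sigma_g\times\Sigma_h)=(2g+1)(2h+1)+1$.

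I do not expect a real obstacle: the substance is entirely in Theorem~\ref{thm1}, and the corollary is bookkeeping. The one step that deserves a careful sentence is $b_1(X)\le k_i$ --- equivalently, that the trisection built in Theorem~\ref{thm1} is \emph{efficient}, i.e.\ $k_i=b_1(X)$ --- and if preferred this lower bound on the trisection genus can be cited from the literature instead of reproved here.
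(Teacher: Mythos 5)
Your proposal is correct, but it takes a genuinely different route from the paper. The paper disposes of the corollary in one sentence: it cites Williams' thesis \cite{W2} for the fact that the trisection genus of $\Sigma_g\times\Sigma_h$ is exactly $(2g+1)(2h+1)+1$, and combines this with Theorem~\ref{thm1} (a Weinstein trisection of that genus exists) to get equality. You instead reprove the lower bound from scratch: the inclusion--exclusion identity $\chi(X)=2+g-(k_1+k_2+k_3)$, the surjection $F_{k_i}\twoheadrightarrow\pi_1(X)$ giving $k_i\ge b_1(X)$, hence $g\ge\chi(X)+3b_1(X)-2$, which evaluates to $(2g+1)(2h+1)+1$ for $\Sigma_g\times\Sigma_h$. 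Your arithmetic and the van Kampen argument both check out (the Hopfian property of free groups is what makes $\pi_1(\partial X_i)\to\pi_1(X_i)$ an isomorphism, as you note implicitly). What your approach buys is self-containment --- the corollary no longer rests on an unpublished thesis --- at the cost of reproducing what is essentially the Gay--Kirby/Williams efficiency bound; the paper's approach is shorter but delegates the entire lower bound to the citation. Either is acceptable; if you keep yours, it would be cleaner to state the inequality $g_{\mathrm{tri}}(X)\ge\chi(X)+3b_1(X)-2$ as a quotable lemma with a reference, since it is standard.
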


This corollary immediately follows from the result in \cite{W2}. It states that the trisection genus of $\Sigma_g\times \Sigma_h$ is $(2g+1)(2h+1)+1$.

This paper is organized as follows:
In Section 2, we review definitions of trisection and Weinstein trisection. After that, we introduce a trisection of $\Sigma_g\times \Sigma_h$ which is constructed in \cite{W2} in Section 3.
Then, we show this trisection can be seen as a Weinstein trisection in Section 4.

\section{preliminalies}
In this section, we set up the notion and objects we use in this paper.
Trisection of 4-manifolds is a decomposition of a four-manifold.
 \begin{definition}
Let $g,k_1,k_2$ and $k_3$ be non-negative integers with $\max\{k_1,k_2,k_3\}\leq g$. 
A $(g; k_1, k_2, k_3)$-{\it trisection} of a closed 4-manifold $X$ 
is a decomposition $X=X_1\cup X_2\cup X_3$ such that for $i,j\in\{1,2,3\}$, 
\begin{itemize}
 \item
$X_i\cong \natural^{k_i}(S^1\times B^3)$, 
 \item
$H_{ij}=X_i\cap X_j\cong \natural^g (S^1\times B^2)$ if $i\neq j$, and 
 \item
$\Sigma=X_1\cap X_2\cap X_3 \cong \#^g (S^1\times S^1)$. 
\end{itemize}
\end{definition}

Symplectic 4-manifolds is a 4-manifold with non-degenerate closed 2-form $\omega$. We denote it by $(X, \omega)$.
In the field of symplectic topology, symplectic manifolds are considered the same if they a symplectomorphic.
\begin{definition}
	Let $(X, \omega)$ be a symplectic manifold and $\varphi$ a diffeomorphism between itself.
	We say $\varphi$ is a symplectomorphism if it preserves the symplectic form $\omega$ (i.e. $\varphi^\ast \omega=\omega$).
\end{definition}
Since $\omega$ is a non-degenerate, we can obtain a volume form $\Omega=\omega\wedge\cdots \wedge \omega$ of $X$ after taking a wedge product n times.
Hence in dimension two, the volume form of it is also a symplectic form so, volume-preserving diffeomorphism is the same as a symplectomorphism.

Lambert-Cole, Meier, and Starkston define a trisection of a symplectic closed 4-manifold adopted to the symplectic structure.
The Weinstein domain, first introduced in \cite{W}, is a symplectic manifold with a contact boundary.
More precisely, let $(X, \omega)$ be a compact symplectic manifold with boundaries.
Then, $(X, \omega)$ is called a Weinstein domain if there exists a Morse function $f$ on $X$ and a  gradient-like vector field $X_f$ of $f$ such that $X_f$ is Liouville vector field (i.e. $d(\iota_{X_f}\omega)=\omega$).
\begin{definition}
	Let $(X, \omega)$ be a symplectic closed 4-manifold and $X=X_1\cup X_2\cup X_3$ a trisection of $X$.
	We say $X=X_1\cup X_2\cup X_3$  is a Weinstein trisection if there is a Morse function $f_i: X \rightarrow \mathbb{R}$ and gradient-like vector field $X_f$ of $f$ such that $(X_i, \omega|_{X_i}, f_i, X_f)$ is a Weinstein domain, 
\end{definition}

Lambert-Cole, Meier, and Starkston showed that any symplectic closed 4-manifold admits a Weinstein trisection by using a branched covering \cite{LMS}.
Weinstein domains induce a contact structure in their boundaries by a 1-form $\iota_{X_{f}} \omega$. 
Also,  $H_{ij}$ has contact structures induced by a Weinstein structure of $X_i$ and $X_j$ \cite{L2}.

And they give the following question.
 \begin{question}
 	Is a trisection genus equal to the Weinstein trisection genus?
 \end{question}
 
 In this paper, we answer this question for the $\Sigma_g\times \Sigma_h$. 
The known trisections of symplectic four-dimensional manifolds are known as Weinstein trisections.
To positively affirm that there are no trisections that are not Weinstein trisections, we still have too few examples. 

\section{trisection of trivial surface bundle}

In this section we review the construction of a trisection of $\Sigma_g\times \Sigma_h$ in \cite{W2}.
Let $X=\Sigma_g\times \Sigma_h$.
First, we consider the decomposition of $\Sigma_g$.
We can decompose any closed surface into three disks.
\begin{lemma}[Lemma 3.1 in \cite{W2}] \label{decolem1}
	$\Sigma_g$ admits a decomposition $\Sigma_g=B_1\cup B_2\cup B_3$ satisfies the following:
		\begin{enumerate}
			\item Each $B_i$ are disks, 
			\item $b_{ij}=B_i\cap B_j$ is $2g+1$ disjoint arcs.
			\item $B_1\cap B_2\cap B_3$ is $4g+2$ disjoint vertices.
		\end{enumerate}
\end{lemma}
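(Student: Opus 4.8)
The plan is to recast the lemma as a purely cellular statement and then prove it by induction on $g$. It suffices to produce a CW decomposition of $\Sigma_g$ with exactly $4g+2$ vertices, $6g+3$ edges and three two-cells, whose closures $B_1,B_2,B_3$ satisfy: (i) each $B_i$ is an embedded disk; (ii) the $1$-skeleton is trivalent; and (iii) at every vertex the three incident corners lie in three distinct two-cells (in particular the three incident edges are distinct and, once each edge is labelled by the unordered pair of two-cells along its two sides, they receive the three distinct colours $\{1,2\},\{1,3\},\{2,3\}$). Granting such a structure, $b_{ij}=B_i\cap B_j$ is exactly the union of the closed colour-$\{i,j\}$ edges: counting edge-ends (one of each colour at each of the $4g+2$ vertices) gives $2g+1$ such edges, and since every endpoint of such an edge has valence $1$ inside $b_{ij}$, these edges form $2g+1$ pairwise disjoint arcs; likewise $B_1\cap B_2\cap B_3$ equals the vertex set, of cardinality $4g+2$. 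Thus the lemma reduces to building a cell structure satisfying (i)--(iii), the only numerical constraint being the Euler characteristic $(4g+2)-(6g+3)+3=2-2g$.

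For the base cases I would use, for $g=0$, the ``beach ball'' decomposition of $S^2$ into three lunes, with the two poles as vertices and three meridians as edges; and for $g=1$, the tiling of the torus by three hexagons, i.e.\ the map $\{6,3\}_{(1,1)}$. The latter exists because any homomorphism $c\colon L\to\Z/3$ from the triangular lattice $L$ of hexagon centres that is non-zero on every edge vector descends to a proper $3$-colouring of the hexagons of the honeycomb on $\R^2/\ker c$, and $L/\ker c$ has order $3$; this torus carries $6$ vertices, $9$ edges and $3$ hexagons. In both models (i) holds because a properly coloured two-cell is never self-adjacent, (ii) because lunes and honeycombs are trivalent, and (iii) because the three two-cells meeting at any vertex carry the three distinct colours.

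The inductive step is a connected-sum move. Suppose the lemma holds for $\Sigma_{g-1}=B_1\cup B_2\cup B_3$, and let $\Sigma_1=B_1'\cup B_2'\cup B_3'$ be the torus model above. Choose a vertex $v$ of the first surface and $v'$ of the second. A small disk neighbourhood $U$ of $v$ meets the $1$-skeleton exactly in the three half-edges at $v$, so $\partial U$ is a circle cut into three arcs $\alpha_1,\alpha_2,\alpha_3$ with $\alpha_i\subset\partial B_i$; let $\alpha_i'\subset\partial B_i'$ arise likewise from $U'\ni v'$. Form $\Sigma_g:=(\Sigma_{g-1}\setminus U)\cup(\Sigma_1\setminus U')$ by gluing $\partial U$ to $\partial U'$ via an orientation-reversing homeomorphism carrying $\alpha_i$ to $\alpha_i'$ for each $i$; this is possible since an orientation-reversing map of circles reverses the cyclic order of the arcs and we are free to relabel $B_1',B_2',B_3'$ beforehand, which realises either cyclic order of the $\alpha_i'$ around $\partial U'$. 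Put $B_i:=(B_i\setminus U)\cup_{\alpha_i\sim\alpha_i'}(B_i'\setminus U')$. Each $B_i\setminus U$ is a disk with a bite removed from its boundary, hence still a disk, and the two pieces are glued along a single boundary arc, so every new $B_i$ is an embedded disk; the $1$-skeleton stays trivalent, the vertices $v,v'$ disappear and their half-edges are spliced in colour-matched pairs across the neck, condition (iii) is maintained, and the counts become $V=(4(g-1)+1)+(4\cdot 1+1)=4g+2$, $E=(6(g-1)+3)+(6\cdot 1+3)-3=6g+3$ and $F=3$. The result is orientable and connected with $\chi=2-2g$, so it is $\Sigma_g$; iterating from the two base cases proves the lemma.

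The part that needs genuine care is the connected-sum move: one must verify that $U$ can be chosen to meet the $1$-skeleton exactly in the three half-edges at $v$, that the gluing homeomorphism can be taken orientation-reversing and colour-matching simultaneously, and --- most delicately --- that the resulting pieces $B_i$ are embedded rather than merely immersed disks. By contrast, the accounting of vertices and edges, the identification of the glued surface via its Euler characteristic, and the deduction that each $b_{ij}$ is a union of $2g+1$ disjoint arcs (with no superfluous circles) are routine once the reduction in the first paragraph is in place.
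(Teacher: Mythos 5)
Your proof is correct and follows essentially the same route as the paper, which only sketches the argument (deferring to Williams): it exhibits the $g=1$ model of Figure~1 --- the three-hexagon tiling of the torus --- and obtains higher genus by exactly the connected-sum-at-a-vertex move you describe. Your write-up is more complete, supplying the $g=0$ ``beach ball'' base case (which the paper in fact uses later for $S^2\times S^2$) together with the colour/counting bookkeeping and embeddedness checks that the paper omits.
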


If $g=1$, this decomposition is illustrated in Figure \ref{surfdeco}.  To obtain the case of a higher genus, remove a neighborhood of a vertex and glue a copy of it to itself along their boundary. 
\begin{figure}[h]
\centering
\includegraphics[scale=1.2]{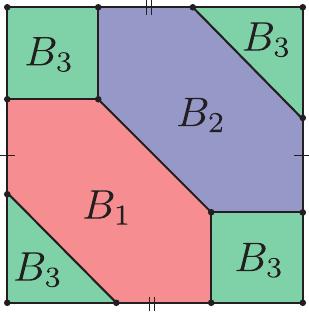}
\caption{The decomposition of $T^2$ in Lemma \ref{decolem1}.}
\label{surfdeco}
\end{figure}

We take mutually disjoint disks $N_i$ in $\Sigma_h$ for $i=1, 2, 3$.
Then we define the 1-handlebodies of a trisection as follows: 
\[
		X_i = (B_i \times (\Sigma_h - Int(N_i ; \Sigma_h)) \cup (B_{i+1}\times N_{i+1}).
\]

To check $X_i$ is a 1-handlebody, we show that $(B_i \times (\Sigma_h - Int(N_i ; \Sigma_h))$ is a 1-handlebody.
Since $ (\Sigma_h - Int(N_i ; \Sigma_h)$ is a punctured genus $h$ surface, this can be represented by a disk and $2h$ 1-handles attached to it. Since $B_i$ is a disk, $(B_i \times (\Sigma_h - Int(N_i ; \Sigma_h))$ is a 1-handlebody diffeomorphic to $\natural^{2h} S^1\times B_3$. 
The intersection $(B_i \times (\Sigma_h - Int(N_i ; \Sigma_h))$ and  $(B_{i+1}\times N_{i+1})$ is a $(B_i\cap B_{i+1})\times N_i$.
This is a disjoint $2g+1$ 3-balls in their boundaries. Since $(B_{i+1}\times N_{i+1})$ is a 4-ball, $X_i$ is a 1-handlebody of genus $2g+2h$.

In \cite{W2}, Williams showed that $X=X_1\cup X_2\cup X_2$ is a trisection.

\begin{theorem}[Theorem 3.3 in \cite{W2}] 
	$X=X_1\cup X_2\cup X_2$ is a genus $(2g+1)(2h+1)+1$ trisection.
\end{theorem}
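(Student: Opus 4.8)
The plan is to verify the three defining conditions of a trisection for $X=X_1\cup X_2\cup X_3$, with trisection genus $G:=(2g+1)(2h+1)+1$, exploiting throughout the product form of the pieces together with the combinatorics of $\Sigma_g=B_1\cup B_2\cup B_3$ recorded in Lemma~\ref{decolem1}. The recurring device is multiplicativity of the Euler characteristic under products, combined with inclusion--exclusion: a connected handlebody has genus $1-\chi$, and a connected closed orientable surface has genus $1-\tfrac12\chi$.

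First I would verify the covering. Write $P_i=B_i\times(\Sigma_h-\Int N_i)$ and $Q_i=B_{i+1}\times N_{i+1}$, so that $X_i=P_i\cup Q_i$, with indices taken mod $3$. Given $(x,y)\in\Sigma_g\times\Sigma_h$: if $y\notin\Int(N_1\cup N_2\cup N_3)$ then $(x,y)\in P_i$ for any $i$ with $x\in B_i$; otherwise $y\in\Int N_j$ for a unique $j$, and then $(x,y)\in B_j\times N_j=Q_{j-1}\subset X_{j-1}$ if $x\in B_j$, while $(x,y)\in P_i$ for some $i\neq j$ if $x\notin B_j$ (using that the $N_\bullet$ are pairwise disjoint). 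Hence $X=X_1\cup X_2\cup X_3$. That each $X_i$ is a $1$-handlebody of genus $2g+2h$ is already established in the excerpt, so in particular $\max_i k_i=2g+2h\le G$.

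The core computation identifies the double and triple intersections. Expanding $X_i\cap X_{i+1}=(P_i\cup Q_i)\cap(P_{i+1}\cup Q_{i+1})$ and $X_1\cap X_2\cap X_3=\bigcap_i(P_i\cup Q_i)$, and using that $N_a\cap N_b=\disjoint$ for $a\neq b$, that $V:=B_1\cap B_2\cap B_3$ consists of the $4g+2$ vertices, and that $b_{ab}\cap b_{ac}=V$ for distinct $a,b,c$, one obtains
\[
H_{i,i+1}=\bigl(b_{i,i+1}\times(\Sigma_h-\Int(N_i\cup N_{i+1}))\bigr)\cup\bigl(b_{i,i+2}\times N_{i+2}\bigr)\cup\bigl(B_{i+1}\times\partial N_{i+1}\bigr)
\]
and
\[
\Sigma=\bigl(V\times(\Sigma_h-\Int(N_1\cup N_2\cup N_3))\bigr)\cup(b_{12}\times\partial N_1)\cup(b_{23}\times\partial N_2)\cup(b_{13}\times\partial N_3).
\]
Since the Euler characteristic of a genus-$h$ surface with $m$ boundary circles is $2-2h-m$, while $\chi(b_{ab})=2g+1$, $\chi(V)=4g+2$ and $\chi(S^1)=0$, inclusion--exclusion gives $\chi(H_{i,i+1})=-(2g+1)(2h+1)$ and $\chi(\Sigma)=-2(2g+1)(2h+1)$. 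For connectedness, note that the graph $\Gamma=b_{12}\cup b_{23}\cup b_{13}=\partial B_1\cup\partial B_2\cup\partial B_3$ is connected (each $\partial B_i$ is a circle and $\partial B_i\cap\partial B_j=b_{ij}\neq\disjoint$), and that each vertex of $V$ is an endpoint of exactly one arc of each $b_{ab}$; tracking how the annular blocks glue the blocks $\{v\}\times(\cdots)$ together exhibits $\Sigma$ as a closed surface whose underlying graph is $\Gamma$, hence connected, and the analogous tracking shows each $H_{i,i+1}$ is connected. Thus $\Sigma$ is a connected closed orientable surface of genus $G$, which is the third trisection condition; moreover $\partial H_{i,i+1}=H_{i,i+1}\cap X_{i+2}=\Sigma$, so once $H_{i,i+1}$ is known to be a handlebody it is forced to be $\natural^{G}(S^1\times B^2)$.

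It remains to prove each $H_{i,i+1}$ is a handlebody, and this is the step I expect to be the main obstacle, since unlike the previous steps it is not mere Euler-characteristic bookkeeping. The plan is to produce a handle decomposition with no handles of index $\ge2$: the block $b_{i,i+2}\times N_{i+2}$ is a disjoint union of $2g+1$ balls, the block $b_{i,i+1}\times(\Sigma_h-\Int(N_i\cup N_{i+1}))$ is a disjoint union of $2g+1$ genus-$(2h+1)$ handlebodies, and $B_{i+1}\times\partial N_{i+1}$ is a solid torus; the balls attach to the second block along the disks $V\times N_{i+2}$, two on each ball, hence as $1$-handles, yielding a handlebody, to which the solid torus is then attached along the $2g+1$ vertical annuli $b_{i,i+1}\times\partial N_{i+1}$. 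The delicate point is that attaching a solid torus along annuli need not preserve the handlebody category in general; here, however, the core of each gluing annulus is $\{\mathrm{pt}\}\times\partial N_{i+1}$, and a boundary loop of $\Sigma_h-\Int(N_i\cup N_{i+1})$ lies in a free basis of its (free) fundamental group and is thus primitive, so after slicing the solid torus along vertical annuli one is left with a sequence of $1$-handle attachments and attachments of solid tori along primitive-core annuli, which one checks remain within the handlebody category. Therefore $H_{i,i+1}\cong\natural^{G}(S^1\times B^2)$, and together with the covering and the properties of the $X_i$ and of $\Sigma$ established above, this shows $X=X_1\cup X_2\cup X_3$ is a genus-$G$ trisection.
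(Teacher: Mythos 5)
The paper does not actually prove this statement---it is imported wholesale from Williams's thesis \cite{W2}, and the only fragment of the verification reproduced in the text is the observation that each $X_i$ is a $1$-handlebody of genus $2g+2h$. Your proposal therefore supplies an argument where the paper supplies a citation, and the parts you carry out in full are correct: the covering argument, the identification of $H_{i,i+1}$ and $\Sigma$ by distributing intersections over the unions $P_i\cup Q_i$, the Euler characteristic values $\chi(H_{i,i+1})=-(2g+1)(2h+1)$ and $\chi(\Sigma)=-2(2g+1)(2h+1)$, and the connectedness argument using the graph $\partial B_1\cup\partial B_2\cup\partial B_3$ together with the fact that each of the $4g+2$ vertices of $V$ is an endpoint of exactly one arc of each $b_{ab}$. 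All of this is consistent with the genus count $G=(2g+1)(2h+1)+1$.

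The one step you only sketch---and where the justification you give is not quite the operative mechanism---is the final gluing of the solid torus $B_{i+1}\times\partial N_{i+1}$ along the $2g+1$ annuli $b_{i,i+1}\times\partial N_{i+1}$. Primitivity of $[\partial N_{i+1}]$ in the free group $\pi_1(\Sigma_h-\mathrm{Int}(N_i\cup N_{i+1}))$ is true, but attaching a solid torus to a handlebody along several annuli with primitive cores does not in general yield a handlebody; what saves the argument is the product structure, not primitivity. Concretely: decompose the disk $B_{i+1}$ into a chain of rectangles $E_1,\dots,E_{2g+1}$, each containing exactly one arc of $b_{i,i+1}$ in its boundary and meeting the next one in an arc $\epsilon_j$. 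Then $E_1\times\partial N_{i+1}$ is a collar on the first gluing annulus, so attaching it changes nothing, and each subsequent piece $E_j\times\partial N_{i+1}\cong A\times[0,1]$ is attached along $A\times\{0,1\}$; splitting the $S^1$-factor into two arcs exhibits this as a $1$-handle followed by a $2$-handle whose attaching circle runs over that $1$-handle exactly once, so the $2$-handle cancels and the result remains a handlebody of unchanged genus. With that substitution your argument is complete and, as far as I can tell, faithful to what Williams's cited proof must do; it is a reasonable self-contained replacement for the citation.
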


The direction of this paper involves examining the above trisection in more detail to demonstrate how it will become a Weinstein trisection. To show this, we consider the Weinstein structure of $\Sigma_g - int (B_i)$ and $\Sigma_h - int (N_i)$. This is constructed in Section 5. 
It is easy to show that each of $N_i$ is mutually ambient isotopic since they are disjoint disks. Also, we can show that each of $B_i$ is mutually ambient isotopic. Actually, in Figure \ref{surfdeco}, $B_i$ is sent to $B_{i+1}$ by ambient isotopy along the diagonal from bottom-left to top-right. We can extend this ambient isotopy to an arbitrary genus since we can construct a decomposition of $\Sigma_g=B_1\cup B_2\cup B_2$ by connecting summing a torus by taking the regular neighborhood of a vertex depicted in Figure \ref{surfdeco}.
We consider this feature with a symplectic structure in the next subsection.


\subsection{symplectomorphisms compatible to the trisection.}
In this subsection, we see the self-symplectomorphisms of symplectic surfaces.
First of all, we provide a definition of symplectic and Hamiltonian isotopy. 
Let $(X, \omega)$ be a closed symplectic manifold and $H: X\to \mathbb{R}$ a smooth function on $X$.
Then there is a unique vector field $X_H$ such that 
\[
	\iota_{X_H}\omega = \omega(X_H, \cdot \ )=dH
\]
since $\omega$ is non-degenerate. We call $X_H$ a {\it Hamiltonian vector field} associated to the {\it Hamlltonian function} $H$.

\begin{definition}
	Let $\varphi_t$ be an ambient isotopy on $X$. 
	We say $\varphi_t$ is {\it symplectic isotopy} if $\varphi_t$ is symplectomorphism for every $t\in [0, 1]$.
	
	A symplectic isotopy is called a {\it Hamiltonian isotopy} if $\iota_{X_t}\omega$ is exact 1-form (i.e. $X_t$ is Hamiltonian vector field for every $t$) where $X_t$ is a vecotor field such that 
	\[
		\frac{d}{dt}\varphi_t = X_t\circ \varphi_t.
	\]
\end{definition}

By definition, a Hamiltonian isotopy is a symplectic isotopy. 

In Section 5, we use the following lemmas to construct Weinstein structures of $\Sigma_g - int (B_i)$ and $\Sigma_h - int (N_i)$.

To show Lemma \ref{lemma1}, we use the following lemma.
\begin{lemma}[cf. \cite{MS}, P.113]\label{hamiso}
Let $(\Sigma_g, \omega)$ be a symplectic closed surface and $B(r)^2$ a standard disk with radius $r$ for some $r\in \mathbb{R}$.
\[	\begin{array}{rccc}
			[0, 1]\  \times&B(r)^2  &\longrightarrow&   \Sigma_g \\
			\rotatebox{90}{$\in$}&& & \rotatebox{90}{$\in$} \\
			 (t, z)& &\longmapsto & \psi_t(z)
	\end{array}
\]
	be a smooth map such that $\psi_t: B(r)^2\rightarrow \Sigma_g$ is a symplectic embedding for every $t$.
	Then there exists a Hamiltonian isotopy
\[	\begin{array}{rccc}
			[0, 1]\times& \Sigma_g & \longrightarrow &  \Sigma_g \\
			\rotatebox{90}{$\in$}& & & \rotatebox{90}{$\in$} \\
			 (t, p) & &\longmapsto & \phi_t(p)
	\end{array}
\]
	such that 
	\[
		\phi_0=id,\  \phi_t\circ\psi_0=\psi_t
	\]
	for all $t$.
\end{lemma}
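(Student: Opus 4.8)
The statement is the standard fact that an isotopy of symplectic embeddings of a disk into a closed symplectic surface extends to a Hamiltonian isotopy of the surface, and the plan is to build $\phi_t$ as the flow of a suitable time-dependent Hamiltonian vector field. First I would record the velocity field of the given family: for each $t$, define a vector field $Z_t$ along the (moving) image $\psi_t(B(r)^2)\subset\Sigma_g$ by $Z_t(\psi_t(z)) = \tfrac{d}{dt}\psi_t(z)$, which is well defined and depends smoothly on $(t,z)$ because each $\psi_t$ is an embedding. Since every $\psi_t$ is a symplectic embedding, $\psi_t^{\ast}\omega$ equals the fixed standard area form of $B(r)^2$ and is therefore independent of $t$; differentiating and using the Cartan formula for time-dependent maps gives $0 = \tfrac{d}{dt}\psi_t^{\ast}\omega = \psi_t^{\ast}\!\left(\mathcal{L}_{Z_t}\omega\right)$, hence $\mathcal{L}_{Z_t}\omega = 0$ on $\psi_t(B(r)^2)$, and since $d\omega = 0$ this reads $d(\iota_{Z_t}\omega) = 0$ there. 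Pulling back, $\psi_t^{\ast}(\iota_{Z_t}\omega)$ is a closed $1$-form on the contractible disk $B(r)^2$ that varies smoothly with $t$, so the homotopy-operator form of the Poincar\'e lemma yields a primitive $k_t$ that is smooth in $(t,z)$; put $K_t = k_t\circ\psi_t^{-1}$, a smooth function on $\psi_t(B(r)^2)$ with $dK_t = \iota_{Z_t}\omega$.

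Next I would globalize. The map $(t,z)\mapsto(t,\psi_t(z))$ embeds $[0,1]\times B(r)^2$ onto a compact piece $W\subset[0,1]\times\Sigma_g$, and the $K_t$ assemble into a smooth function on $W$; extending this by a parametrized smooth extension theorem together with a cutoff produces a family $H_t$ of smooth functions on $\Sigma_g$, smooth in $t$, with $H_t = K_t$ on $\psi_t(B(r)^2)$. Let $Y_t$ be the Hamiltonian vector field determined by $\iota_{Y_t}\omega = dH_t$ and let $\phi_t$ be its flow with $\phi_0 = \id$; this flow is defined for all $t\in[0,1]$ because $\Sigma_g$ is closed, and by construction $(\phi_t)$ is a Hamiltonian isotopy. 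Because $\psi_t(B(r)^2)$ is an open subset of the two-dimensional $\Sigma_g$, the equality $dH_t = dK_t = \iota_{Z_t}\omega$ of $1$-forms there, together with non-degeneracy of $\omega$, forces $Y_t = Z_t$ along $\psi_t(\Int B(r)^2)$. Hence, for fixed $z$, both $t\mapsto\phi_t(\psi_0(z))$ and $t\mapsto\psi_t(z)$ solve $\dot\gamma(t) = Y_t(\gamma(t))$ with value $\psi_0(z)$ at $t=0$, so by uniqueness of solutions they coincide, and continuity extends $\phi_t\circ\psi_0 = \psi_t$ to the closed disk. This gives $\phi_0 = \id$ and $\phi_t\circ\psi_0 = \psi_t$ for all $t$, as claimed.

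The one genuinely delicate point is that every choice along the way — the primitives $k_t$ and their global extensions $H_t$ — must be made to depend smoothly on $t$ while the disk $\psi_t(B(r)^2)$ is itself moving, which is why I would carry out the Poincar\'e lemma and the smooth extension once and for all on the tube $W\subset[0,1]\times\Sigma_g$ rather than surface-by-surface; the explicit homotopy operator keeps $k_t$ smooth in $t$, and a cutoff identically equal to $1$ near $W$ keeps the extension honest, i.e.\ preserves $dH_t = \iota_{Z_t}\omega$ along the disk. Everything else — existence of the flow on the closed surface $\Sigma_g$, the ODE uniqueness step, and the fact that $(\phi_t)$ is Hamiltonian by construction — is then routine; compare the discussion in \cite{MS} around p.~113.
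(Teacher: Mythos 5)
Your argument is correct, and it is essentially the proof the paper is implicitly invoking: the paper gives no proof of this lemma at all, only the citation to McDuff--Salamon, and your write-up is a faithful reconstruction of that standard argument (differentiate $\psi_t^{\ast}\omega=\mathrm{const}$ to get a closed $1$-form $\iota_{Z_t}\omega$ along the moving disk, use contractibility of $B(r)^2$ to find a primitive depending smoothly on $t$, cut off to a global time-dependent Hamiltonian, and conclude by uniqueness of ODE solutions). You also correctly isolate the only delicate points, namely smooth $t$-dependence of the primitive and of the extension, and the vanishing of the flux obstruction because $H^1$ of the disk is trivial; nothing further is needed.
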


\begin{lemma}\label{lemma1}
	Let $\Sigma_g=B_1\cup B_2\cup B_3$ be a decomposition as in Lemma \ref{decolem1} and $\omega$ a symplectic form of  $\Sigma_g$.
	Suppose that each area of $B_i$ is identical for $i=1, 2, 3$ with respect to $\omega$. 
	Then there exists a symplectomorphism $\varphi_i$ such that $\varphi_i(B_i)=B_{i+1}$.
\end{lemma}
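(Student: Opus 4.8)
The plan is to reduce the statement to Lemma~\ref{hamiso} by first observing that we have considerable freedom to modify $\omega$ without changing anything essential. Since $\Sigma_g$ is two-dimensional, a symplectic form is the same as an area form, and Moser's theorem tells us that any two area forms with the same total area are symplectomorphic. So I would begin by fixing a convenient "model" area form $\omega_0$ on $\Sigma_g$ — for instance one built from the connected-sum picture in Figure~\ref{surfdeco}, in which each $B_i$ is literally a round disk of a fixed radius and the gluing maps are explicitly symmetric — and transport everything to that model. The hypothesis that $B_1,B_2,B_3$ have equal $\omega$-area is exactly what guarantees, via Moser, that after this transport the three pieces can be taken to have equal area in the model as well; this is where the equal-area assumption gets used.

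Next I would use the ambient isotopy of $\Sigma_g$ already identified in Section~3 — the one sending $B_i$ to $B_{i+1}$, coming from the "diagonal" symmetry of the torus picture and extended across the connected sum. Call it $\rho_i:\Sigma_g\to\Sigma_g$, a diffeomorphism with $\rho_i(B_i)=B_{i+1}$. The pullback $\rho_i^\ast\omega$ is another area form on $\Sigma_g$ with the same total area as $\omega$, so by Moser it is symplectomorphic to $\omega$; but I want more than that — I want to control the behaviour on $B_i$. The key point is that $\rho_i$ restricted to $B_i$ gives a symplectic embedding of $(B_i,\omega|_{B_i})$ into $(\Sigma_g,\omega)$ with image $B_{i+1}$, provided I first compose with a suitable area-preserving identification of $B_i$ with a standard disk $B(r)^2$ of the right radius (possible because $B_i$ is a disk of a given area). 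So I have two symplectic embeddings of $B(r)^2$ into $(\Sigma_g,\omega)$: the "identity-like" inclusion onto $B_i$, call it $\psi_0$, and the one landing on $B_{i+1}$, call it $\psi_1$.

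Now the real engine: to apply Lemma~\ref{hamiso} I need a \emph{path} $\psi_t$ of symplectic embeddings of $B(r)^2$ interpolating from $\psi_0$ to $\psi_1$. This is where the main work lies. The existence of such a path follows from connectedness of the space of symplectic embeddings of a disk into a connected symplectic surface — concretely, one can first move the center of the image along a path in $\Sigma_g$ (using a compactly supported Hamiltonian flow), bringing $\psi_0$ to an embedding whose image is a small disk near the center of $B_{i+1}$, then scale up; and any two symplectic embeddings of a small disk with the same center differ by an element of the linear symplectic (area-preserving) group $SL(2,\mathbb{R})$ of the tangent space, which is connected, so they are joined by a path. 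Alternatively, one can invoke directly the classical fact (in McDuff--Salamon, same section cited for Lemma~\ref{hamiso}) that the group of area-preserving diffeomorphisms of a connected surface acts transitively on symplectically embedded disks of a fixed area, with connected "stabilizer-to-orbit" structure making the orbit path-connected. Either way I obtain the desired family $\psi_t$.

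Feeding $\psi_t$ into Lemma~\ref{hamiso} produces a Hamiltonian isotopy $\phi_t$ of $\Sigma_g$ with $\phi_0=\mathrm{id}$ and $\phi_t\circ\psi_0=\psi_t$; in particular $\phi_1$ is a symplectomorphism of $(\Sigma_g,\omega)$ carrying the image of $\psi_0$, which is $B_i$, onto the image of $\psi_1$, which is $B_{i+1}$. Setting $\varphi_i=\phi_1$ (or transporting back through the Moser identification if I worked in the model $\omega_0$) gives the claimed symplectomorphism with $\varphi_i(B_i)=B_{i+1}$. I expect the Moser-reduction and the application of Lemma~\ref{hamiso} to be routine; the one genuinely substantive step is the construction of the path $\psi_t$ of symplectic embeddings, i.e.\ verifying that the inclusions of $B_i$ and $B_{i+1}$ are connected through symplectic embeddings of a standard disk — this is the part that really uses that we are in dimension two and that $\Sigma_g$ is connected.
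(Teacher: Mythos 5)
Your proposal follows essentially the same route as the paper: both reduce the lemma to Lemma~\ref{hamiso} by producing a one-parameter family of symplectic embeddings of a standard disk $B(r)^2$ into $(\Sigma_g,\omega)$ starting at a parametrization of $B_i$ and ending at one of $B_{i+1}$, and then taking the time-one map of the resulting Hamiltonian isotopy. The only difference is cosmetic: the paper builds the path by symplectically parametrizing the disks $\psi_t(B_i)$ along the ambient isotopy and ``perturbing'' the radii to be constant, whereas you justify the existence of the path via connectedness of the space of symplectic disk embeddings — a more explicit account of the same step.
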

\begin{proof}
	Let $\psi_t: \Sigma_g\rightarrow \Sigma_g$ be a ambient isotopy, such that 
	\[
		\psi_1(B_i)= B_{i+1}.
	\]
	Let $p_i$ be a point in the interior of $B_i$. Then,  $\psi(p_i)$ is a point in the interior of $\psi(B_i)$.
	Then we can construct a symplectic embedding $\varphi'_t (B(r_i(t)))\rightarrow \Sigma_g$ so that 
	\[
		\varphi'_t(B(r_i(t)))=\psi_t(B_i)
	\]
	where $r_i(t)\in \mathbb{R}$ is a smooth function on $[0, 1]$ for $i=1, 2, 3$.
	Then, we perturb $\varphi'$ so that $r_i(t)=r_j$ for $i\neq j$ and any $t\in [0, 1]$, we obtain the follwoing family of symplectic embeddings $\varphi'_t$ such that
	\[
		\varphi'_t: B(r)\rightarrow \Sigma_g
	\]
	\[
		\varphi'_0(B(r))=B_i, \varphi'_1(B_i)=B_{i+1}.
	\]
	
	By Lemma \ref{hamiso}, we obtain a Hamiltonian isotopy $\varphi_t: \Sigma_g\rightarrow \Sigma_g$ such that 
	\[
		\varphi_0=id,\  \varphi_t\circ \varphi'_0=\varphi'_t.
	\]
	Then $\varphi_1$ is a symplectomorphism we want.
\end{proof}

From the result below, We can also assume that the $N_i$ in $\Sigma_h$ are sent to each other by symplectomorphism.
\begin{lemma}[Theorem A in \cite{B}]\label{hlem}
	Let $(\Sigma_h, \omega)$ be a symplectic closed surface and $\{p_1, \ldots, p_n\}$ and $\{q_1, \ldots, q_n\}$ are two sets of distinct points of $\Sigma_h$.
	Then there is a symplectomorphism $\varphi$ such that $\varphi(p_i)=q_i$ for $i=1, \ldots, n$ that is isotopic to identity by an isotopy which preserves the structure and leaves fixed every point of $\Sigma_h$ outside a compact set of arbitrarily small volume.
\end{lemma}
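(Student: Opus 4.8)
The plan is to realise $\varphi$ as the time-one map of a Hamiltonian isotopy built by concatenating elementary moves, each supported in a thin neighbourhood of an embedded arc, and to relocate the marked points one at a time. The elementary move is the following: given points $a,b\in\Sigma_h$ joined by an embedded arc $\gamma$, there is a Hamiltonian isotopy $\psi_t$ with $\psi_0=\mathrm{id}$ and $\psi_1(a)=b$ whose support lies in an arbitrarily small neighbourhood of $\gamma$, hence in a set of arbitrarily small area. To produce it, cover $\gamma$ by finitely many Darboux charts; in the model $(\mathbb{R}^2, dx\wedge dy)$ a linear Hamiltonian has constant Hamiltonian vector field, so a linear Hamiltonian multiplied by a bump function that equals $1$ on a neighbourhood of a short subarc generates a compactly supported flow translating that subarc as prescribed, and concatenating these moves along $\gamma$ carries $a$ to $b$ with support in a union of small discs about $\gamma$. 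Since that support sits inside a disc, the closed $1$-forms $\iota_{X_t}\omega$ generating the isotopy have compactly supported primitives, so the isotopy is genuinely Hamiltonian; and because $\omega$ is an area form, ``symplectic'' and ``area-preserving'' coincide here, so these diffeomorphisms preserve the structure. (Alternatively, one can obtain the elementary move by feeding into Lemma~\ref{hamiso} a path of symplectic embeddings of a small disc through $a$ whose centre traces $\gamma$.)

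Next I would introduce parking points to handle the general configuration uniformly. Fix a Darboux disc $B\subset\Sigma_h$ disjoint from all $2n$ points $p_1,\dots,p_n,q_1,\dots,q_n$ and choose distinct points $r_1,\dots,r_n\in B$. Relocating the $p_i$ to the $r_i$ one at a time --- at the $i$-th stage choosing an embedded arc from $p_i$ to $r_i$ that avoids the finite set $\{r_1,\dots,r_{i-1}\}\cup\{p_{i+1},\dots,p_n\}$, which exists since a surface with finitely many points removed is connected, together with a neighbourhood of the arc disjoint from that set, and applying the elementary move --- yields a Hamiltonian isotopy $\Psi_t$ with $\Psi_1(p_i)=r_i$ for every $i$ and support of arbitrarily small area. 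Here the target $r_i$ always lies in $B$, so it is never in the avoidance set and no conflict arises. Running the identical construction for the $q_i$ produces a Hamiltonian isotopy $\Phi_t$ with $\Phi_1(q_i)=r_i$. Then $\varphi:=\Phi_1^{-1}\circ\Psi_1$ satisfies $\varphi(p_i)=q_i$, is isotopic to the identity through structure-preserving maps (concatenate $\Psi_t$ with the reverse of $\Phi_t$), and is supported in the union of two small-area sets, whose total area can be forced below any prescribed bound.

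The points needing verification are routine: that the concatenated elementary move really carries $a$ all the way to $b$ --- one arranges this by taking a fine enough cover of $\gamma$, so that each short subarc stays where its cutoff equals $1$ and consecutive charts overlap suitably --- and that being Hamiltonian is preserved under concatenation, inversion, and composition, which is standard. The only genuine obstacle is the bookkeeping: keeping every support inside a prescribed small-area neighbourhood while simultaneously making each arc miss the relevant finite point-set. The parking-point reduction is what renders this bookkeeping uniform, since afterwards every single point-move has its target in the fixed disc $B$; once that is set up, no analytic difficulty remains.
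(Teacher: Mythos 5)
Your argument is correct, but note that the paper does not prove this statement at all: it is imported verbatim as Theorem~A of Boothby \cite{B}, whose original argument establishes $n$-fold transitivity for automorphism groups of several geometric structures at once (volume forms, symplectic forms, contact forms) by an abstract argument about transitive, locally transitive transformation groups. What you have written is the standard self-contained proof in the special case of an area form on a surface: elementary Hamiltonian moves supported in thin disc neighbourhoods of embedded arcs, obtained by cutting off linear Hamiltonians in Darboux charts, combined with the parking-point trick to decouple the source and target configurations (which is exactly what handles possible coincidences $p_i=q_j$). The bookkeeping you flag --- choosing each arc to miss the finitely many points that must stay fixed, and shrinking the regular neighbourhood accordingly --- is routine, and your observation that the generating functions are globally defined cutoffs (so the isotopy is genuinely Hamiltonian, not merely symplectic) actually yields a slightly stronger conclusion than the lemma as stated. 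Your alternative route through Lemma~\ref{hamiso} (dragging a small Darboux disc along the arc) is also the mechanism the paper itself uses in the proof of Lemma~\ref{lemma1}, so it is consistent in spirit with the rest of the paper. In short: the paper buys brevity by citation to a more general theorem; your proof buys self-containedness and the explicit smallness and Hamiltonian properties of the support, at the cost of the chart-by-chart concatenation details.
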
 

\section{Stein and Weinstein structure and Riemann surface.}

\subsection{Stein structure of a Riemann surface}
We will review the concepts of Stein domains and that of Riemann surfaces.
A Riemann surface is a 2-manifold with a complex structure $J$.
A Stein manifold is a complex manifold that is embedded in $\mathbb{C}^N$, where $N$ is a natural number. Grauert provided a characterization of Stein manifolds based on the function they admit (refer to \cite{GR}). This function is known as a strictly plurisubharmonic or $J$-convex function.
In the context of a smooth function $f: M\rightarrow \mathbb{R}$, we say that it is exhausting if it is both proper and bounded from below.

\begin{definition}
	Let $(M, J)$ be a complex manifold and $f:M\to \mathbb{R}$ a function.
	$f$ is called a {\it plurisubharmonic or $J$-convex} function if the 2-form
	\[
		\omega_f:=-d(df\circ J)=-dd^{\mathbb{C}} f
	\]
	satisfies
	\[
		\omega_f(v, Jv)>0
	\]
	for every non-zero tangent vector $v\in TM$.
\end{definition}

It is well-known that a Riemann surface is a Stein manifold if and only if it is a non-compact.
Hence, $\Sigma^n_g$ becomes a Stein domain where $\Sigma^n_g$ is genus $g$ closed surface removed $n$ disks.
Hence we obtain the following:
\begin{lemma}\label{Stein_1}
	Let $\Sigma_g$ and $\Sigma_h$ be surfaces with genera $g$ and $h$ respectively, and $B_i$ and $N_i$ for $i=1, 2, 3$ are disks in $\Sigma_g$ and $\Sigma_h$ respectively that are defined in Section 3.
	Then, $\Sigma_h-int (N_i)$ and $\Sigma_g- B_{i+2}$ will be Stein domains for $i=1, 2, 3$. 
\end{lemma}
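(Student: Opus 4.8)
The plan is to obtain both Stein structures directly from the fact recalled just above, that a non-compact Riemann surface is a Stein manifold, together with the remark that $\Sigma^n_g$ is then a Stein domain. First I would record that both surfaces named in the lemma are, up to an evident diffeomorphism, the single model $\Sigma^1_k$ (a closed orientable surface of genus $k$ with one open disk removed): for $\Sigma_h-\mathrm{int}(N_i)$ this is immediate with $k=h$, and for $\Sigma_g-B_{i+2}$, interpreted as $\Sigma_g-\mathrm{int}(B_{i+2})$, it is immediate with $k=g$, using that each $B_j$ is a disk. So it is enough to produce a Stein structure on $\Sigma^1_k$ for an arbitrary genus $k$.

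To do this I would fix any complex structure $J$ on the open surface $S:=\mathrm{int}(\Sigma^1_k)$, making $(S,J)$ a non-compact Riemann surface and hence, by the cited result, a Stein manifold. A Stein manifold comes equipped with an exhausting strictly plurisubharmonic Morse function $\phi\colon S\to\mathbb{R}$, and it is exhausted by the Stein domains $W_c:=\phi^{-1}((-\infty,c])$ over regular values $c$. The last step is to check that for $c$ large enough $W_c$ is diffeomorphic to $\Sigma^1_k$; granting this, transporting $(J,\phi)$ along such a diffeomorphism and specializing to $k=h,g$ finishes the proof.

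The point that needs a little care --- and the one I regard as the main obstacle --- is precisely this last step, since a priori the critical points of $\phi$ could accumulate at the end of $S$ and keep altering the diffeomorphism type of $W_c$ as $c$ grows. It is resolved using that $S$ has exactly one end: once $c$ lies above all the critical values that change topology, $S\setminus W_c$ is a half-open annular collar of that end, so $W_c$ has genus $k$ and a single boundary circle, i.e. $W_c\cong\Sigma^1_k$ by the classification of non-compact surfaces. Alternatively one can sidestep the issue by taking $(S,J)$ to be the complement of a point in a smooth projective curve of genus $k$, where an explicit proper plurisubharmonic exhaustion with sublevel sets of the right diffeomorphism type is available. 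Either way the remaining checks are routine.
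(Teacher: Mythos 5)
Your proposal takes essentially the same route as the paper, which gives no separate proof of this lemma: it simply invokes the fact that a non-compact Riemann surface is Stein and immediately concludes that $\Sigma^n_g$ (a genus-$g$ surface with $n$ disks removed) is a Stein domain. Your argument is correct and in fact supplies the details the paper leaves implicit, namely how to pass from the open Stein surface to a compact sublevel-set domain diffeomorphic to $\Sigma^1_k$.
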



\subsection{Weinstein structure of Riemann surface}
Liouville vector field $\xi$ on symplectic manifold $(X, \omega)$ is a vector field such that $d (i_X \omega)=\omega$. 
If the Lie derivative of $\omega$ along $X$ is $\omega$, then $X$ is a Liouville vector field. This follows from $\omega$ is closed. Liouville domain is the symplectic manifold with some compatible vector fields with contact boundaries. 
\begin{definition}
	Let $(W, \omega)$ be a compact symplectic manifold with no empty boundary.
	$(W, \omega)$ is called a {\it Liouville domain} if there is a Liouville vector field defined globally and it is transversally out of the boundary. We denote it by $(W, \omega, X)$
\end{definition}

If the Liouville domain has a "compatible" Morse function, it is called a Weinstein domain.

\begin{definition}
	Let  $(W, \omega, X)$ be a Liouville domain.
	 $(W, \omega, X)$ is called a {\it Weinstein domain} if there exists a Morse function $f$ it is locally constant in $\partial W$ and $X$ is gradient-like for $f$.
\end{definition}

For a given symplectic manifold, it is difficult to determine whether it admits a Weinstein structure or not.
Also, generally,  it is difficult to construct a Weinstein structure but we sometimes construct it from a Stein structure.

We say exhausting J-convex function $f$ is {\it completely exhausting} if its gradient vector fields $\nabla_{f}f$ is complete where $\nabla_{f}f$ is a gradient respect to a Riemann metric $\omega_{f}(\cdot, J\cdot)$ for $\omega_f=-dd^{\mathbb{C}} f$.  The following is a well-known theorem:

\begin{theorem}[\cite{EG}]\label{StoW}
	Let $(V, J)$ be a Stein manifold and $f: V\rightarrow \mathbb{R}$ a completely exhausting $J$-convex Morse function. Then, 
	\[ 
		(\omega_{f}:=-dd^{\mathbb{C}}f, X_{f}:=\nabla_{f}f, f)
	\] 
is a Weinstein structure on $V$.
\end{theorem}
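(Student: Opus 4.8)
The final statement to prove is Theorem~\ref{StoW}, which is attributed to Eliashberg--Gromov: given a Stein manifold $(V,J)$ and a completely exhausting $J$-convex Morse function $f$, the triple $(\omega_f := -dd^{\mathbb{C}}f,\, X_f := \nabla_f f,\, f)$ is a Weinstein structure on $V$. Since this is a citation, the task is to sketch the standard proof rather than discover something new.

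\medskip

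The plan is to verify the three defining properties of a Weinstein structure one at a time. First I would check that $\omega_f$ is a symplectic form: it is closed because $\omega_f = -dd^{\mathbb{C}}f$ is manifestly exact, and it is non-degenerate precisely because $f$ is $J$-convex, i.e.\ $\omega_f(v,Jv)>0$ for all nonzero $v$, which makes $g_f(\cdot,\cdot) := \omega_f(\cdot, J\cdot)$ a Riemannian metric and forces $\omega_f$ to be non-degenerate (a 2-form tamed by an almost complex structure is symplectic in the linear-algebra sense at each point). Second, I would show $X_f = \nabla_f f$ is a Liouville vector field for $\omega_f$, i.e.\ $\mathcal{L}_{X_f}\omega_f = \omega_f$, equivalently $d(\iota_{X_f}\omega_f) = \omega_f$ using Cartan's formula and closedness of $\omega_f$. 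The key computation is the identity $\iota_{\nabla_f f}\,\omega_f = -df\circ J = d^{\mathbb{C}}f$: by definition of the gradient with respect to $g_f$, $\omega_f(\nabla_f f, \cdot) = g_f(\nabla_f f, J\cdot)\cdot(\text{sign})$... more carefully, $df = g_f(\nabla_f f,\cdot) = \omega_f(\nabla_f f, J\cdot)$, so $\iota_{\nabla_f f}\omega_f = -df\circ J$ (replacing the slot argument $w$ by $J^{-1}w = -Jw$), and hence $d(\iota_{X_f}\omega_f) = -d(df\circ J) = \omega_f$. Third, $X_f = \nabla_f f$ is by construction gradient-like for $f$ (indeed it is the honest gradient), and $f$ being exhausting and Morse provides the required function; completeness of $\nabla_f f$ guarantees the flow is defined for all time, which is what lets one pass between the open-manifold picture and the sublevel-set-as-domain picture.

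\medskip

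I expect the main obstacle — or rather the only genuinely substantive point — to be the interplay between ``exhausting'' and the Weinstein domain structure on sublevel sets. Strictly, a Weinstein \emph{domain} is compact with contact boundary, whereas here $V$ is an open Stein manifold. The bridge is: for a regular value $c$ of $f$, the sublevel set $\{f\le c\}$ is a compact Weinstein domain because $\nabla_f f$ points strictly outward along the regular level set $\{f=c\}$ (since $df(\nabla_f f) = \|\nabla_f f\|^2_{g_f} > 0$ there), so the boundary is of contact type with contact form $(\iota_{X_f}\omega_f)|_{\partial} = (-df\circ J)|_{\partial}$; completeness of the flow and the Morse condition then let us exhaust $V$ by such domains, which is the standard meaning of a Weinstein structure on a non-compact manifold. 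The non-degeneracy step and the Liouville identity are then pure local linear algebra plus Cartan calculus, with no real difficulty. Since the statement is quoted from \cite{EG}, in the paper itself I would simply cite it and, if desired, include the one-line verification that $\iota_{\nabla_f f}\omega_f = d^{\mathbb{C}}f$ so that $d(\iota_{X_f}\omega_f) = \omega_f$ is transparent to the reader.
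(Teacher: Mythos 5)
The paper does not prove this statement---it is quoted from Eliashberg--Gromov \cite{EG} as a known theorem---so there is no internal proof to compare against; your recognition that only the standard verification is called for is exactly right, and your sketch (closedness and non-degeneracy of $\omega_f$ from $J$-convexity via the taming metric $g_f=\omega_f(\cdot,J\cdot)$, the Liouville identity $\iota_{\nabla_f f}\omega_f=-df\circ J$, the gradient-like condition, and the passage to compact sublevel sets using completeness and the exhausting hypothesis) is the standard and correct argument. One small inconsistency: with the paper's convention $d^{\mathbb{C}}f=df\circ J$ (forced by $\omega_f=-d(df\circ J)=-dd^{\mathbb{C}}f$), your identity should read $\iota_{\nabla_f f}\omega_f=-df\circ J=-d^{\mathbb{C}}f$, not $+d^{\mathbb{C}}f$; the subsequent computation $d(\iota_{X_f}\omega_f)=-d(df\circ J)=\omega_f$ is nevertheless correct, so this is a labeling slip rather than a gap.
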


Hence, we can construct a Weinstein structure of $\Sigma_h-int (N_i)$ from a Morse $J$-convex function $f_i$ for $i=1, 2, 3$. 

\section{Proof of the main theorem}
Let us consider a 1-handlebody 
\[ 
	X_i=(\Sigma_h - int(N_i))\times B_i\cup N_{i+1}\times B_{i+1}
\]
 that constructs a trisection prescribed in Section 3. We show that each of $X_i$ admits a Weinstein structure with respect to a symplectic structure defined by a product structure of $\Sigma_g\times \Sigma_h$.


Let $g_1: \Sigma_g-B_3\rightarrow \mathbb{R}$ be a $J$-convex function such that $B_1$ contains only one critical point and its index is $0$.
Then we define the symplectic structure $\omega_g$ on $\Sigma_g$ so that 
\[
	\omega_g |_{\Sigma_g-B_3}=- dd^{\mathbb{C}} g_1.
\]
By Lemma \ref{lemma1}, there is a symplectomorphism $\varphi$ such that $\varphi(B_i)=B_{i+1}$. Then we define the $J$-convex function $g_2$ and $g_3$ as follows:
\[
		g_2=g_1\circ \varphi^{-1} |_{\Sigma_g - B_1}: \Sigma_g - B_1 \rightarrow \mathbb{R}, 
\]
\[
		g_3=g_2\circ \varphi^{-1}i |_{\Sigma_g - B_2}: \Sigma_g - B_2 \rightarrow \mathbb{R}.
\]
Then we can define the Weinstein structures on $\Sigma_g - B_{i+2}$ for $i=1, 2, 3$ by Theorem \ref{StoW}.

Let $N_1$ be a sufficiently small disk in $\Sigma_h$ and $f_1: \Sigma_h - N_1\rightarrow \mathbb{R}$ a $J$-convex function. Also, we suppose that $N_2$ is a sufficiently small regular neighborhood of index $0$ critical point of $f_1$ and $N_3$ a disk in the interior of $\Sigma_h - (N_1\cup N_2)$.
Then we define the symplectic structure $\omega_h$ of $\Sigma_h$ so that 
\[
	\omega_h|_{ \Sigma_h - N_1}= - dd^{\mathbb{C}} f_1.
\]
By Lemma \ref{hlem}, we can obtain the symplectomorphism $\varphi_2: (\Sigma, \omega_h)\rightarrow (\Sigma_h, \omega_h)$ such that 
\[
	\varphi(N_1)=N_2, \ \varphi(N_2)=N_3.
\]
Then, we obtain the $J$ convex function $f_2: \Sigma - N_2\rightarrow \mathbb{R}$ as follows:
\[
	f_2=f_1\circ \varphi_2^{-1}|_{\Sigma_h - N_2}: \Sigma_h - N_2 \rightarrow \mathbb{R}.
\]
Also, we can obtain a symplectomorphism  $\varphi_3: (\Sigma, \omega_h)\rightarrow (\Sigma_h, \omega_h)$ such that 
\[
	\varphi(N_2)=N_3, \ \varphi(N_3)=N_1.
\]
by Lemma \ref{hlem}. Then, we can define the $J$-convex function $f_3: \Sigma - N_3\rightarrow \mathbb{R}$ as follows:
\[
	f_3=f_2\circ \varphi_3^{-1}|_{\Sigma_h - N_3}: \Sigma_h - N_3 \rightarrow \mathbb{R}.
\]
Finally, we can construct a Weinstein structure on $\Sigma_h - N_i$ for $i=1, 2, 3$ by Theorem \ref{StoW}.


\begin{proof}[Proof of Theorem \ref{thm1}]
	Let $(\Sigma_h \times \Sigma_g, \omega)$ be a trivial bundle over surface with symplectic structure $\omega=pr_1^{\ast} \omega_h+pr_2^{\ast}\omega_g$ that is defined above.
	Now, the function $f_i+g_i$  is a $J$-convex Morse function of $(\Sigma_h- N_i)\times (\Sigma_g - B_{i+2})$.
	So, we have to show that the gradient vector field of this Morse function is outward and it is a Liouville vector field on $X_i$ with respect to a symplectic structure $\omega$. Let $\xi_i$ be the gradient vector field of $f_i+g_i$.
	
	First, we show that $\xi_i$ is a Liouville vector field on $X_i$ with respect to $\omega$.
	Now, $f_i$ and $g_i$ are $J$-convex Morse function, and $\omega_h|\Sigma_h- N_i$ and $\omega_g|\Sigma_g - B_{i+2}$ are symplectic structure defined by $-dd^{\mathbb{C}}f_i$ and $-dd^{\mathbb{C}}g_i$ respectively.
	Hence, by Theorem \ref{StoW}, $\xi_i$ gives a Liouville vector fields on $(\Sigma_h- N_i)\times (\Sigma_g - B_{i+2})$, particularly on $X_i$ for $\omega$.
	
	Next, we show that $\xi_i$ is transverse outwardly on the boudary of $X_i$.
	we recall that 
	\[
		X_i=(\Sigma_h - int(N_i))\times B_i\cup N_{i+1}\times B_{i+1}.
	\]
	Since $(\Sigma_h - int(N_i))\times int(B_i)$ and $N_{i+1}\times B_{i+1}$ intersects their boundaries, $\partial X_i$ can be desicribed as following:
	\begin{align*}
		\partial X_i =&( (\partial(\Sigma_h- int(N_i))\times B_i)\cup  ((\Sigma_h- int(N_i)\times \partial B_i) )) \\
		 		   &\cup ((\partial N_{i+1}\times B_{i+1} )\cup (N_{i+1}\cup \partial B_{i+1})) \\
		                    & - ( ((\Sigma_h - int(N_i))\times int(B_i))\cap (N_{i+1}\times B_{i+1} ))
	\end{align*}
	We note that  $((\Sigma_h - int(N_i))\times int(B_i))\cap (N_{i+1}\times B_{i+1} )= N_{i+1}\times (B_i\cap B_{i+1})$.
	Then, we will see whether $\xi_i$ is outward for each region.
	We note that $B_i$ is a neighborhood of an index $0$ critical point of $g_i$ by definition of $g_i$. 
	Then, the restriction of $\xi_i$ to $B_i$ is transverse outwardly on its boundary.
	Since the restriction of $\xi_i$ to $B_i$ and $\Sigma_h-int (N_i)$ is transverse outwardly on its boundary, $\xi_i$ transverse outwardly on $(\partial(\Sigma_h- int(N_i))\times B_i)$ and $((\Sigma_h- int(N_i)\times \partial B_i)$.
	
	We note that $N_{i+1}$ is a neighborhood of an index $0$ critical point of $f_i$ by definition of $f_i$.
	$\xi_i$ transverse outwardly on $\partial (N_{i+1}\times B_{i+1})$ except $N_{i+1}\times (B_{i} \cap \partial B_{i+1})$ since the restriction of $\xi$ to $B_{i+1}$ transeverse outwardly on $\partial B_{i+1}$ except $B_{i}\cap B_{i+1}$.
	But we see that 
	\[
		((\Sigma_h - int(N_i))\times int(B_i))\cap ( N_{i+1}\times B_{i+1}) = N_{i+1}\times (B_i\cap B_{i+1}).
	\]
	Hence the region $N_{i+1}\times (B_i\cap \partial B_{i+1})$  is not included a boundary of $X_i$.
	Hence $\xi_i$ transverse outwardly on $\partial X_i$ entirely.

\end{proof}

\section{The case where $g=h=0$.}
In this section, we give an example of the Weinstein trisection of $S^2\times S^2$.
It is a trivial $S^2$ bundle over $S^2$.
We denote it $S_1\times S_2$.
Also, $S_2$ has a decomposition described above. More precisely, we assume that $S_2$ is decomposed into three disks $B_1$, $B_2$, and $B_3$ as in Figure \ref{surfdeco}.
We note that the Weinstein trisection of $S^2\times S^2$ is constructed in former articles \cite{LMS}.
But in this section, we will describe it slightly more explicitly.

We construct a Weinstein trisection of $S^2\times S^2$ as the following steps:
\begin{enumerate}
	\item[Step 1:] For a given symplectic structure on $S_1 - N_i$, we define a Morse function $f_i$ so that $N_{i+1}$ is a neighborhood of index $0$ critical point of $f_i$, it does not contain the other critical point and whose gradient flow is a Liouville vector field of the symplectic structure. Furthermore, each of $N_i$ is mutually symplectically isotopic to each other.
	
	\item[Step 2:] For a given symplectic structure on $S_2 - B_i$, we define a Morse function $g_i$ so that $B_i$ is a neighborhood of index $0$ critical point of $g$ and does not contain the other critical points of $g$, $B_{i+1}$ is a neighborhood of index $2$ critical point of $g$ and does not contain the other critical points of $g$ and whose gradient vector field is a Liouville vector field for the symplectic structure. Furthermore, each of $B_i$ is mutually symplectically isotopic to each other.
	
	\item[Step 3:] $f+g$ is a Morse function on $X_i$ and a gradient vector field of it is a Liouville vector field for the products of the symplectic structure of $S_1$ and $S_2$ and $(X_i, \omega, g+f, grad(g+f))$ will be a Weinstein domain.
	
\end{enumerate}

To begin the steps above, we review the K\"{a}hler structure on $S^2$. First of all, we review the Fubini-study form of $\mathbb{C}P^1$.

Let $(z_1, z_2)$ be a homogenious coordinate of $\mathbb{C}P^1$. Then we can take a chart of $\mathbb{C}P^1$ by taking $\phi_i:\mathbb{C}P^1\to \mathbb{C}$ for  $z_j\neq 0$
\[
	\phi_i(z_1, z_2)=\frac{z_i}{z_j} \ (i\neq j).
\]
We denote this chart by $\{U_i, \phi_i\}_{i=1, 2}$.
Then we define the Fubini-Study form by 
\[
	\omega_{FS} = 2 \frac{\partial^2 \log(1+|z|^2 )}{\partial z\partial \bar{z}} dz \wedge d\bar{z}
\]
where $z=z_1/z_2$.
It is known that this form gives us the  K\"{a}hler form on $U_1$, and in this case, $f= \log(1+|z|^2)$ gives a  K\"{a}hler potential.
The critical point of $f$ is only $z=0$. This means that $z_1=0$ since $z_2\neq 0$.
Also, we note that this is a $J$-holomorphic function on $U_1$. 
Hence, $U_1$ is a Stein manifold with $J$-holomorphic function $f$.

Next, we review the identification between $S^2$ and $\mathbb{C}P^1$. 
We denote the polar coordinate of the unit sphere by the following (See Figure \ref{stproj}):
\begin{align*}
	x=&\sin \theta_1 \cos\theta_2, \\
	y=&sin\theta_1 \sin \theta_2, \\
	z=&\cos \theta_1.
\end{align*}
	 \begin{figure}[h]
		\centering
		\includegraphics[scale=0.6]{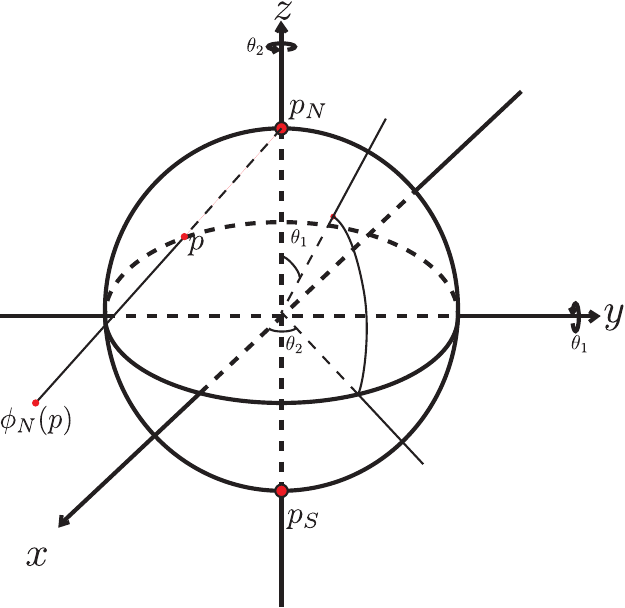}
		\caption{This Figure represents a polar coordinate of $S^2$. In this figure, $\phi_N$ is a stereographic projection.}
		\label{stproj}
	\end{figure}

Then we can represent a point on $S^2$ by $(x, y, z)\in S^2\subset \mathbb{R}^3$. We denote the stereographic projection $\phi_N: S^2\backslash p_N\to \mathbb{R}^2$ where $p_N=(0, 0, 1)$.
Then we can define the diffeomorphism $\Phi : \mathbb{C}P^1\to S^2$ by 
\begin{equation*}
\Phi(z_1, z_2)=
\begin{cases}{}
\phi_N(z)&	(z_2\neq 0)\\
p_N&		(z_2=0)
\end{cases}     
\end{equation*}        
where $z=z_1/z_2$.
We sometimes use this diffeomorphism to define the function on $S^2$ below.

We identify $f\circ \Phi^{-1}$ and ${\Phi^{-1}}^\ast \omega_{FS}$ by $f$ and $\omega_{FS}$ respectively.

\subsection{Step 1: Weinstein structure on $S_1- N_i$}
	We define the region $N_1$, $N_2$ and $N_3$ on $S_1$.
	Let $\varphi$ be a $4/3 \pi$-rotation along a $y$-axis.
	Then we denote $P_N=p_1$, $\varphi(P_N)=p_2$ and $\varphi^2(P_N)=p_3$.
	Also, we denote $P_S=q_1$, $\varphi(P_S)=q_2$ and $\varphi^2(P_S)=q_3$.
	Then we define $N_1$ as a neighborhood of $p_1$ so that it contains $q_3$ and does not contain $p_i$ and $q_j$ for $i=2, 3, j=1, 2$.
	$N_2$ and $N_3$ are defined by $\varphi(N_1)$ and $\varphi^2(N_3)$ respectively (See Figure \ref{S_1}).
	
	 \begin{figure}[h]
		\centering
		\includegraphics[scale=0.6]{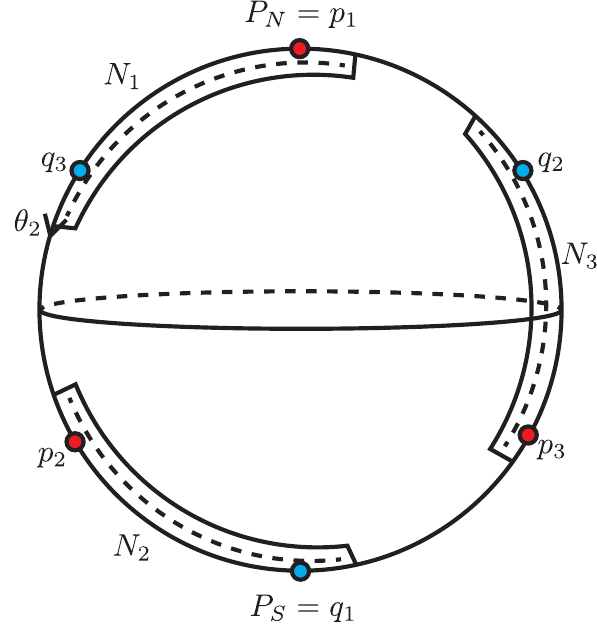}
		\caption{Each of $N_i$ is a region as in this figure. They are mutually permuted by rotation $\varphi$}
		\label{S_1}
	\end{figure}
	
	We take rotation $\varphi$ along a $y-axis$ such that $\varphi(N_1)\cap N_1$ is an empty set.
	Also, the we define $\varphi(N_1)=N_2$ and $\varphi(N_2)=N_3$.
	Since a rotation is volume-preserving, this is a sympelctomorphism that sends $N_i$ to each other.
	Then $f$ define the $J$-convex function on $S^2-N_1$ respect to $\omega_{FS}$.
	By using Theorem \ref{StoW}, we can obtain the Weinstein structure on $S_1- N_1$.
	By permuting $N_i$ by rotation, we can also obtain the Weinstein structure on $S_1 - N_i$ for $i=2, 3$.
\subsection{Step 2: Weinstein structure on $S_2 - B_{i+2}$}
	We define the region $B_i$ in $S_2$ for $i=1, 2, 3$ as in Lemma \ref{surfdeco}.
	In this case, the intersection $B_i\cap B_j$ for $i\neq j$ is an arc.
	We define the region $B_i$ as follows:
	\[
		B_1=\left\{(\sin \theta_1 \cos\theta_2, \sin\theta_1 \sin \theta_2, \cos \theta_1)\in S^2\mid \frac{4}{12}\pi \leq \theta_2 \leq \frac{12}{12}\pi \right \}, 
	\]
	\[
		B_2 = \left\{(\sin \theta_1 \cos\theta_2, \sin\theta_1 \sin \theta_2, \cos \theta_1)\in S^2 \mid \frac{12}{12}\pi \leq \theta_2 \leq \frac{20}{12}\pi \right \},
	\]
	\[
		B_2 = \left\{(\sin \theta_1 \cos\theta_2, \sin\theta_1 \sin \theta_2, \cos \theta_1)\in S^2 \mid \frac{20}{12}\pi \leq \theta_2 \leq \frac{24}{12}\pi \right \}.
	\]
	We can see each region as in Figure \ref{Bregion}.
	 \begin{figure}[h]
		\centering
		\includegraphics[scale=0.7]{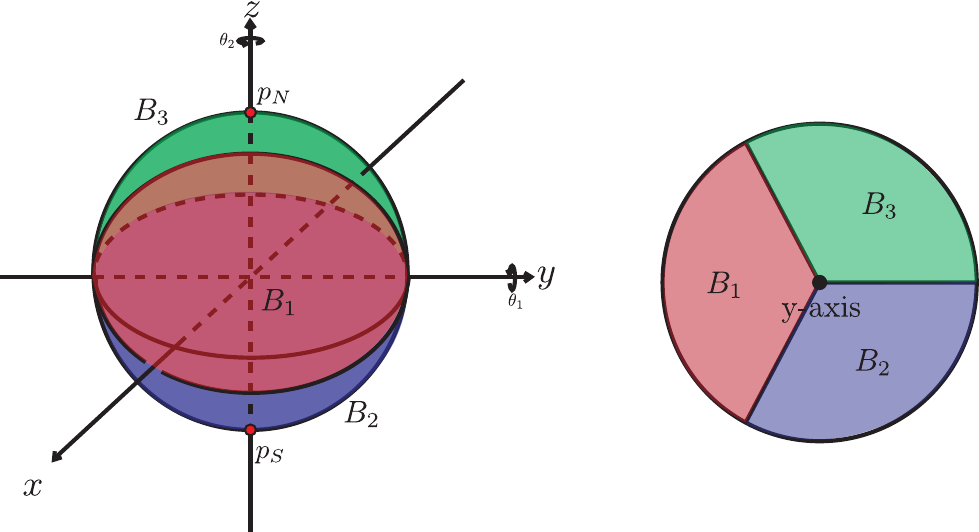}
		\caption{The Left Figure describes the regions $B_i$ and the right figure is a picture view from a direction perpendicular to the $xz$ plane.}
		\label{Bregion}
	\end{figure}
 	It is easy to see that each of $B_i$ is mutually permuted by rotating along a $y$-axis.
	Also, we can define the $J$-holomorphic function $g_1$ respect to $\omega_{FS}$ on $S_2 - int (B_3)$ from the function $f$ defined above.
	Also, composing a $g_1$ and rotating, we can also obtain the $J$-convex function on $S_2 - int (B_i)$ for $i=1, 2$.
	We will check that the critical point of $g_1$ is included in $B_2$. 
	
	$g_1$ is a restriction of $f$ to $S^2- int(B_3)$ it was defined before the steps. 
	Its critical point is $z_1=0$.
	This is corresponds to a point $p_S=(0, 0, -1)$ by the identification map $\Phi$.
	Also, this critical point has an index $0$ since by Poincare-Hopf theorem,  $f$ is a Morse function on disk and it has only one critical point.
	Hence we can assume $B_2$ is a neighborhood of index $0$ critical point of $g_1$.
\subsection{Step 3: $(X_i, \omega\mid_{X_i}, (g+f)\mid_{X_i}, grad(g+f))$ is a Weinstein domain.}
	We remain the following: First, we have to check the gradient flow of $g_i + f_i$ is the Liouville vector field. Next, we have to check it is outward.
	By Theorem \ref{StoW}, we can see that the gradient-like flow of $f_i$ and $g_i$ are Liouville vector fields with respect to $\omega_{FS}$.
	Hence we can show that the gradient-like flow of $f_i + g_i$ is a Liouville vector field with respect to $\omega_{FS}+\omega_{FS}=2\omega_{FS}$.
	
	Finally, we will check it is outward. Following the construction of trisection above, $X_i$ is a set as follows:
	\[
		X_i = ((S_1 - N_i)\times B_{i} )\cup (N_{i+1}\times B_{i+1})
	\]
	We show the case where $i=1$. 
	We note that each of $(S_1 - N_1) \times B_1$ and $N_{2}\times B_{2}$ are diffeomorphic to a 4-ball since $S_1 - N_1$ is a disk.
	We defined the vector field on $S_{1} - N_1$ and $B_1\cup B_2$ by the gradient-like vector field of $f_1$ and $g_1$ respectively. They are outward on each of $S_{1} - N_1$ and $B_1\cup B_2$ respectively.
	We denote an arc $B_1\cap B_2$ $\alpha$.
	Then the intersection of two 4-ball $(S_1 - N_1) \times B_1$ and $N_{2}\times B_{2}$ is a 3-ball $N_2\times \alpha$.
	We shall consider the boundary of $X_i$. 
	$\partial X_i$ is a union of  $\{(S_1 - N_1) \times B_1 \} - (N_2\times \alpha)$ and $N_{2}\times B_{2} - N_2\times \alpha$.
	Now, the gradient-like vector field of $f_1 + g_1$ on $(S_1 - N_1) \times B_1$ outward since $(S_1 - N_1)$ and $B_1$ are neiborhood of index $0$ critical point of $f_1$ and $g_1$ respectively.
	Also, the gradient-like vector field of $f_1 + g_1$ on $N_{2}\times B_{2}$ is outward except $N_2\times \alpha$ since we can assume that $N_2$ is a neighborhood of index $0$ critical point of $f_1$ and the gradient-like vector field of $g_1$ is outward on $B_1\cup B_2$.
	This is enough to show that $f_1 + g_1$ is outward on $X_1$.
	
	\section{Acknowledgement}
The author thanks Takahiro Oba for a very meaningful discussion and suggestion about research in symplectic topology, also giving comments on a draft of this paper.

\end{document}